\newtheorem{theorem}{Theorem}[section]
\newtheorem{lemma}[theorem]{Lemma}
\newtheorem{proposition}[theorem]{Proposition}
\newtheorem{corollary}[theorem]{Corollary}
\theoremstyle{definition}
\newtheorem{definition}[theorem]{Definition}
\newtheorem{example}[theorem]{Example}
\theoremstyle{remark}
\newtheorem{remark}[theorem]{Remark}
\numberwithin{equation}{section}
\newcommand{\thmref}[1]{Theorem~\ref{#1}}
\newcommand{\lemref}[1]{Lemma~\ref{#1}}
\newcommand{\propref}[1]{Proposition~\ref{#1}}
\newcommand{\corref}[1]{Corollary~\ref{#1}}
 \def\Tr{\mathop{\rm Trace}\nolimits}
 \def\R{{\bf{R}}}
\def\defequal{\stackrel{\mathrm{def}}{=}}
\title{Continuity of the dual Haar measure}
\author{Jean N. Renault}
\address{Institut Denis Poisson (UMR 7013)\\
  Universit\'e d'Orl\'eans et CNRS \\ 45067
  Orl\'eans Cedex 2, FRANCE}
\email{jean.renault@univ-orleans.fr}
\begin{document}
\vskip5mm
\begin{abstract} Given a locally compact group bundle, we show that the system of  the Plancherel weights of their C*-algebras is lower semi-continuous. As a corollary, we obtain that the dual Haar sytem of a continuous Haar system of a locally compact abelian group bundle is also continuous.
\end{abstract}\maketitle
\markboth{Jean Renault}
{Dual Haar system}
\renewcommand{\sectionmark}[1]{}
Let $G$ be a locally compact abelian group with Haar measure $\lambda$. The dual Haar measure $\hat\lambda$ on the dual group $\hat G$ is the Haar measure on $\hat G$ which makes the Fourier transform defined by
\[{\mathcal F}(f)(\chi)=\int f(\gamma)\overline{\chi(\gamma)}d\lambda(\gamma)\]
for $f$ in the space $C_c(G)$ of complex-valued continuous functions with compact support on $G$, an isometry from $L^2(G,\lambda)$ to $L^2(\hat G,\hat\lambda)$. Suppose now that $p:G\to X$ is a locally compact abelian group bundle. Here, we mean that $G$ and $X$ are locally compact Hausdorff spaces, $p$ is a continuous surjection and the fibres $G_x=p^{-1}(x)$ are abelian groups. We also require that the multiplication, as a map from $G\times_X G$ to $G$, and the inverse, as a map from $G$ to $G$ are continuous. A Haar system for $G$ is a family of measures $(\lambda^x)_{x\in X}$, where for all $x\in X$, $\lambda^x$ is a Haar measure of $G_x$; it is said to be continuous if for all $f\in C_c(G)$, the function $x\mapsto\int f d\lambda^x$ is continuous. Then we can form the dual group bundle $\hat p:\hat G\to X$. As a set, $\hat G$ is the disjoint union of the dual groups $\hat G_x$. Moreover, it can be identified with the spectrum of the commutative C*-algebra $C^*(G)$, where $G$ is viewed as a locally compact groupoid with Haar system $(\lambda^x)_{x\in X}$. Hence it is endowed with a locally compact topology. It can be checked (\cite[Corollary 3.4]{mrw:continuous-trace III}) that $\hat p:\hat G\to X$ is indeed a locally compact abelian group bundle in the above sense. Of course, one expects that the dual Haar system $(\hat\lambda^x)_{x\in X}$ is continuous. This is stated as Proposition 3.6 of \cite{mrw:continuous-trace III}. However it was recently pointed to the authors by Henrik Kreidler that their proof is defective. This note corrects this and gives a more general result, based on the fact that the dual Haar measure $\hat\lambda$ on the dual group $\hat G$ of a locally compact abelian group $G$, viewed as its Haar weight and defined as the canonical weight of $L^\infty(\hat G)$ acting on $L^2(\hat G,\hat\lambda)$, corresponds under the Fourier transform to the Plancherel weight of $G$, defined as the canonical weight of the von Neumann algebra $VN(G)$ of $G$ acting on $L^2(G,\lambda)$. Therefore one can consider a locally compact group bundle $p:G\to X$ where the fibres $G_x$ are no longer abelian. Our main result is \propref{main} which says that the Plancherel weight of $G_x$ varies continuously in a suitable sense. This lead us to the definition of a lower semi-continuous $C_0(X)$-weight on a $C_0(X)$-C*-algebra which we illustrate by three examples.

The first section recalls the construction of the canonical weight of a left Hilbert algebra and the properties which are needed to prove the crucial \corref{key}. In the second section, we consider the case of a locally compact group bundle and prove our main result. This example motivates the definition of a $C_0(X)$-weight of a $C_0(X)$-C*-algebra, given in the third section. The fact that the Plancherel $C_0(X)$-weight of the C*-algebra of a group bundle is densely defined and lower semi-continuous gives   Proposition 3.6 of \cite{mrw:continuous-trace III}.

\section{The Plancherel weight of a locally compact group}

We recall first some elements of Tomita-Takesaki's theory, using the standard notation from \cite{tak:tom}. Given a left Hilbert algebra $\mathcal A$ where the product, the involution and the scalar product are respectively denoted by $ab$, $a^\sharp$ and $(a\,|\,b)$, we denote by $\mathcal H$ the Hilbert space completion of $\mathcal A$ and by $\pi:{\mathcal A}\to{\mathcal L}({\mathcal H})$ the left representation. We denote by ${\mathcal M}=\pi({\mathcal A})''$ the left von Neumann algebra of $\mathcal A$. We denote by $S$ the closure of the involution $a\mapsto a^\sharp$ and by $F$ its adjoint. The domain of $S$ [resp. $F$] is denoted by ${\mathcal D}^\sharp$ [resp. ${\mathcal D}^\flat$] and one writes $\xi^\sharp=S\xi$ [resp. $\eta^\flat=F\eta$] for $\xi\in {\mathcal D}^\sharp$  [resp. for $\eta\in {\mathcal D}^\flat$]. An element $\eta\in {\mathcal H}$ is called right bounded if there exists a bounded operator $\pi'(\eta)$ on $\mathcal H$ such that $\pi'(\eta)a=\pi(a)\eta$ for all $a\in{\mathcal A}$. One then writes $\xi\eta=\pi'(\eta)\xi$ for all $\xi\in{\mathcal H}$. The set of right bounded elements is denoted by ${\mathcal B}'$. One shows that ${\mathcal A}'={\mathcal B}'\cap {\mathcal D}^\flat$ with involution $\xi^\flat$ is a right Hilbert algebra in the same Hilbert space $\mathcal H$. An element $\xi\in {\mathcal H}$ is called left bounded if there exists a bounded operator $\pi(\xi)$ on $\mathcal H$ such that $\pi(\xi)\eta=\pi'(\eta)\xi$ for all $\eta\in{\mathcal A}'$. One then writes $\xi\eta=\pi(\xi)\eta$ for all $\eta\in{\mathcal H}$. If $\xi$ is left bounded and $\eta$ is right bounded, the notation is consistent: $\pi(\xi)\eta=\xi\eta=\pi'(\eta)\xi$. The set of left bounded elements is denoted by ${\mathcal B}$. We shall need the following well-known lemmas.

\begin{lemma}\label{left bounded} Let ${\mathcal B}$ be the set of left bounded elements of $\mathcal H$. Then,
\begin{enumerate}
 \item ${\mathcal B}$ is a linear subspace of $\mathcal H$ containing $\mathcal A$.
 \item $\pi({\mathcal B})$ is contained in the left von Neumann algebra ${\mathcal M}$.
 \item ${\mathcal B}$ is stable under ${\mathcal M}$. More precisely, for $T\in {\mathcal M}$ and $\xi\in {\mathcal B}$, $\pi(T\xi)=T\pi(\xi)$.
\end{enumerate}
 \end{lemma}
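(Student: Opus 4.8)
The plan is to argue directly from the definitions of left and right bounded vectors, using two standard facts from Tomita--Takesaki theory that come with the setup: first, that $\pi(a)$ commutes with $\pi'(\eta)$ whenever $a\in{\mathcal A}$ and $\eta\in{\mathcal B}'$ (a left bounded and a right bounded operator always commute), so that $\pi'({\mathcal A}')\subseteq\pi({\mathcal A})'={\mathcal M}'$; and second, the commutation theorem ${\mathcal M}=\pi'({\mathcal A}')'$ together with the density of the right Hilbert algebra ${\mathcal A}'$ in ${\mathcal H}$. Part (i) is then formal: if $\xi_1,\xi_2\in{\mathcal B}$ and $c_1,c_2$ are scalars, then for every $\eta\in{\mathcal A}'$ one has $\pi'(\eta)(c_1\xi_1+c_2\xi_2)=\big(c_1\pi(\xi_1)+c_2\pi(\xi_2)\big)\eta$, so $c_1\xi_1+c_2\xi_2$ is left bounded and $\pi$ is linear on ${\mathcal B}$; and ${\mathcal A}\subseteq{\mathcal B}$ simply restates the definition of right boundedness, since for $a\in{\mathcal A}$ the bounded operator $\pi(a)$ satisfies $\pi(a)\eta=\pi'(\eta)a$ for all $\eta\in{\mathcal A}'\subseteq{\mathcal B}'$.

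For part (ii), since ${\mathcal M}=\pi'({\mathcal A}')'$, it suffices to check that $\pi(\xi)$ commutes with $\pi'(\eta)$ for every $\eta\in{\mathcal A}'$, and this can be verified on the dense subspace ${\mathcal A}'$. Given $\zeta\in{\mathcal A}'$, the product $\zeta\eta=\pi'(\eta)\zeta$ again lies in ${\mathcal A}'$, and testing against ${\mathcal A}$ and using that $\pi(a)$ commutes with $\pi'(\eta)$ yields $\pi'(\zeta\eta)=\pi'(\eta)\pi'(\zeta)$. Hence
\[\pi(\xi)\pi'(\eta)\zeta=\pi(\xi)(\zeta\eta)=\pi'(\zeta\eta)\xi=\pi'(\eta)\pi'(\zeta)\xi=\pi'(\eta)\pi(\xi)\zeta,\]
and density of ${\mathcal A}'$ gives $\pi(\xi)\pi'(\eta)=\pi'(\eta)\pi(\xi)$, so $\pi(\xi)\in\pi'({\mathcal A}')'={\mathcal M}$.

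For part (iii), given $T\in{\mathcal M}$ and $\xi\in{\mathcal B}$, the candidate operator is $T\pi(\xi)$, which is bounded; for $\eta\in{\mathcal A}'$ one has $\pi'(\eta)\in\pi({\mathcal A})'={\mathcal M}'$, hence $\pi'(\eta)(T\xi)=T\pi'(\eta)\xi=T\pi(\xi)\eta=(T\pi(\xi))\eta$, which shows $T\xi\in{\mathcal B}$ and $\pi(T\xi)=T\pi(\xi)$ (note this part uses only the easy inclusion $\pi'({\mathcal A}')\subseteq{\mathcal M}'$, not the full commutation theorem). The whole argument is routine once the commutation theorem and the density of ${\mathcal A}'$ are available; the only points needing a little care are the identity $\pi'(\zeta\eta)=\pi'(\eta)\pi'(\zeta)$ in (ii), which must first be proved on the dense subalgebra ${\mathcal A}$ and then extended by continuity, and the bookkeeping of the order of multiplication when passing between the left Hilbert algebra and its right companion.
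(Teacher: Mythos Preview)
Your argument is correct and is essentially the standard proof; the paper itself does not give a proof but simply refers the reader to \cite[Section 2]{com:poids associe}, and what you have written is precisely the routine verification one finds there. One minor remark: in part (ii) you invoke the commutation theorem ${\mathcal M}=\pi'({\mathcal A}')'$, which is a nontrivial input from Tomita--Takesaki theory, whereas parts (i) and (iii) (as you correctly note) need only the easy inclusion $\pi'({\mathcal A}')\subseteq{\mathcal M}'$; this dependency is standard and unavoidable at this level of generality.
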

 
 \begin{proof}
See \cite[Section 2]{com:poids associe}. 
\end{proof}

\begin{lemma}\label{adjoint} Let $\xi,\xi'\in{\mathcal B}$ such that $\pi(\xi')=\pi(\xi)^*$. Then $\xi$ belongs to ${\mathcal D}^\sharp$ and $\xi'=\xi^\sharp$.
\end{lemma}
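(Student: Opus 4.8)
The plan is to identify $\mathcal D^\sharp=D(S)$ with $D(F^*)$ and to check, on a convenient core of $F$, the relation defining the conjugate-linear adjoint $F^*$. Since $S$ is closed and conjugate-linear and $F=S^*$, one has $S=S^{**}=F^*$, the adjoint of a conjugate-linear operator being characterized by $(F\eta\,|\,\zeta)=(S\zeta\,|\,\eta)$ for $\zeta\in D(S)$ and $\eta\in D(F)$. Thus ``$\xi\in\mathcal D^\sharp$ and $\xi^\sharp=\xi'$'' is exactly the single identity
\[
(F\eta\,|\,\xi)=(\xi'\,|\,\eta)\qquad\text{for all }\eta\in D(F).
\]
Both sides are continuous in $\eta$ for the graph norm of $F$, and $\mathcal A'$ is a core for $F$ with $F\eta=\eta^\flat$ for $\eta\in\mathcal A'$ (see \cite[Section 2]{com:poids associe}); so it is enough to verify this identity for $\eta$ ranging over any subset of $\mathcal A'$ that is still a core for $F$.

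I would take that subset to be the linear span $\mathcal A'\mathcal A'$ of the products $\nu\mu$ with $\mu,\nu\in\mathcal A'$, because such an element can be factored, whereas there is no way to simplify $(\eta^\flat\,|\,\xi)$ for a bare $\eta\in\mathcal A'$. For $\eta=\nu\mu$ the identity comes out of a routine slide of bounded operators between the two entries of the inner product: using $\pi'(\mu)^*=\pi'(\mu^\flat)$ with $\mu^\flat\in\mathcal A'$ one moves $\pi'(\mu)$ across; the left-boundedness of $\xi$ turns $\pi'(\mu^\flat)\xi$ into $\pi(\xi)\mu^\flat$; the hypothesis $\pi(\xi)^*=\pi(\xi')$ replaces $\pi(\xi)$ by $\pi(\xi')$; and the left-boundedness of $\xi'$, together with $\pi'(\nu)^*=\pi'(\nu^\flat)$ and $(\nu\mu)^\flat=\mu^\flat\nu^\flat$, rewrites the outcome as $((\nu\mu)^\flat\,|\,\xi')$. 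This yields $(\xi\,|\,\eta)=(\eta^\flat\,|\,\xi')$ for all $\eta\in\mathcal A'\mathcal A'$; since $\mathcal A'\mathcal A'$ is stable under $\flat$, replacing $\eta$ by $\eta^\flat$ and conjugating gives $(F\eta\,|\,\xi)=(\xi'\,|\,\eta)$ on $\mathcal A'\mathcal A'$.

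It then remains to see that $\mathcal A'\mathcal A'$ is a core for $F$. With a self-adjoint approximate unit $(e_i)$ of $\mathcal A'$ whose elements are also left bounded and satisfy $\pi(e_i)\to 1$, $\pi'(e_i)\to 1$ strongly (such a net is supplied by the Tomita algebra), one has, for $\eta\in\mathcal A'$, that $e_i\eta=\pi(e_i)\eta\to\eta$ and $(e_i\eta)^\flat=\eta^\flat e_i=\pi'(e_i)\eta^\flat\to\eta^\flat$, with $e_i\eta\in\mathcal A'\mathcal A'$; hence every element of the core $\mathcal A'$ lies in the graph-norm closure of $F|_{\mathcal A'\mathcal A'}$, so $\mathcal A'\mathcal A'$ is a core for $F$. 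Combining the steps gives $\xi\in D(F^*)=\mathcal D^\sharp$ and $\xi^\sharp=F^*\xi=\xi'$. I expect the only delicate points to be keeping the conjugate-linearity of $S$ and $F$ (and the resulting adjoint identity) exactly straight, and the passage to products: the computation produces the identity only on $\mathcal A'\mathcal A'$, so one genuinely needs the standard fact (see \cite{tak:tom}) that this span is already a core for $F$. The operator sliding itself is otherwise entirely routine.
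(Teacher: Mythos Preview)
Your proof is correct and follows essentially the same route as the paper: both reduce to the identity $(\xi\,|\,\eta_1\eta_2^\flat)=(\eta_2\eta_1^\flat\,|\,\xi')$ for $\eta_1,\eta_2\in\mathcal A'$ via the same sliding computation, and then conclude from the fact that products in $\mathcal A'$ suffice. The only difference is packaging: the paper invokes \cite[Lemma~3.4]{tak:tom} for that last step, whereas you spell out explicitly that $\mathcal A'\mathcal A'$ is a core for $F$ using a Tomita-algebra approximate unit.
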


\begin{proof} For all $\eta_1,\eta_2\in {\mathcal A}'$, one has:
 \[\begin{array}{lll}
(\xi\,|\, \eta_1\eta_2^\flat)&=(\xi\,|\,\pi'(\eta_2^\flat)\eta_1)\\
&=(\xi\,|\,\pi'(\eta_2)^*\eta_1)\\
&=(\pi'(\eta_2)\xi\,|\,\eta_1)\\
&=(\pi(\xi)\eta_2\,|\,\eta_1)\\
&=(\pi(\xi')^*\eta_2\,|\,\eta_1)\\
&=(\eta_2\,|\,\pi(\xi')\eta_1)\\
&=(\eta_2\,|\,\pi'(\eta_1)\xi')\\
&=(\pi'(\eta_1)^*\eta_2\,|\,\xi')\\
&=(\pi'(\eta_1^\flat)\eta_2\,|\,\xi')\\
&=(\eta_2\eta_1^\flat\,|\,\xi')\\
\end{array}\]
According to \cite[Lemma 3.4]{tak:tom} and the comments which follow, this suffices to conclude.
\end{proof}

\begin{definition} The canonical weight $\tau$ of the left Hilbert algebra $\mathcal A$ is the map $\tau: {\mathcal M}_+\to [0,\infty]$ defined by
\[\tau(T)=\left\{\begin{array}{ccc}
\|\xi\|^2\,&{\rm if}&\,\exists\,\xi\in {\mathcal B}: T=\pi(\xi)^*\pi(\xi)\\
\infty &{\rm otherwise}&
\end{array}\right.\]

\end{definition}

\begin{lemma}\cite[Th\'eor\`eme 2.11]{com:poids associe}
 The above canonical weight $\tau$ is well-defined and is a faithful, semi-finite, $\sigma$-weakly lower semi-continuous weight on ${\mathcal M}_+$.
\end{lemma}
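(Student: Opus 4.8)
The plan is to follow \cite[Th\'eor\`eme 2.11]{com:poids associe}, checking the assertions one at a time. Besides \lemref{left bounded} I will use two standard facts of the theory: the commutation theorem $\pi'({\mathcal A}')''={\mathcal M}'$, and the resulting injectivity of $\pi$ on ${\mathcal B}$ — if $\pi(\zeta)=0$ then $\pi'(\eta)\zeta=\pi(\zeta)\eta=0$ for every $\eta\in{\mathcal A}'$, so, as $1\in{\mathcal M}'$ and $\pi'({\mathcal A}')$ is $\sigma$-weakly dense in ${\mathcal M}'$, $\zeta\in\overline{{\mathcal M}'\zeta}=\overline{\pi'({\mathcal A}')\zeta}=\{0\}$.

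\textbf{Well-definedness and a normalization.} If $T=\pi(\xi)^*\pi(\xi)$ with $\xi\in{\mathcal B}$, write the polar decomposition $\pi(\xi)=v\,T^{1/2}$ in ${\mathcal M}$ and let $p=v^*v$ be the support projection of $T$; then $\xi_0:=v^*\xi\in{\mathcal B}$ with $\pi(\xi_0)=v^*\pi(\xi)=T^{1/2}$ by \lemref{left bounded}(iii), and $p\xi_0=\xi_0$. If also $T=\pi(\xi')^*\pi(\xi')$, then with $u=v{v'}^*\in{\mathcal M}$ one has $\pi(\xi)=u\pi(\xi')=\pi(u\xi')$, hence $\xi=u\xi'$ by injectivity, so $\|\xi\|\le\|u\|\,\|\xi'\|\le\|\xi'\|$, and by symmetry $\|\xi\|=\|\xi'\|$. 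Thus $\tau$ is well defined, and every $T$ with $\tau(T)<\infty$ has a unique left-bounded square root $\xi_0$ with $\pi(\xi_0)=T^{1/2}$, $p\xi_0=\xi_0$ and $\tau(T)=\|\xi_0\|^2$.

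\textbf{Weight axioms.} Homogeneity ($cT=\pi(\sqrt c\,\xi)^*\pi(\sqrt c\,\xi)$) and faithfulness ($\tau(T)=0\Rightarrow\xi_0=0\Rightarrow T=0$) are immediate. For additivity, let $S,T\in{\mathcal M}_+$ have $\tau(S),\tau(T)<\infty$ and take their normalized square roots $\xi_S,\xi_T$. Since $S,T\le S+T$, there are contractions $c,d\in{\mathcal M}$ vanishing off the support projection $p$ of $S+T$ with $S^{1/2}=c(S+T)^{1/2}$, $T^{1/2}=d(S+T)^{1/2}$ and $c^*c+d^*d=p$. Then $(S+T)^{1/2}=c^*S^{1/2}+d^*T^{1/2}=\pi(\zeta)$ with $\zeta:=c^*\xi_S+d^*\xi_T\in{\mathcal B}$, so $\tau(S+T)=\|\zeta\|^2$; while $\pi(\xi_S)=c\pi(\zeta)=\pi(c\zeta)$ forces $\xi_S=c\zeta$, likewise $\xi_T=d\zeta$, and $p\zeta=\zeta$ because $c^*$ and $d^*$ have range in $p{\mathcal H}$, whence $\tau(S)+\tau(T)=\|c\zeta\|^2+\|d\zeta\|^2=((c^*c+d^*d)\zeta\,|\,\zeta)=(p\zeta\,|\,\zeta)=\|\zeta\|^2=\tau(S+T)$. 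The relation $S^{1/2}=c(S+T)^{1/2}$ read backwards (using $(S+T)^{1/2}=\pi(\zeta)$, $\zeta\in{\mathcal B}$) shows $\tau(S+T)<\infty\Rightarrow\tau(S),\tau(T)<\infty$ and that $\tau$ is monotone, which settles the cases with an infinite value.

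\textbf{Semi-finiteness and lower semicontinuity.} Since ${\mathcal A}\subseteq{\mathcal B}$ and $\pi$ is a $*$-representation, $\{\pi(\xi):\xi\in{\mathcal B}\}$ is a left ideal of ${\mathcal M}$ (by \lemref{left bounded}(iii)) containing the $\sigma$-weakly dense $*$-algebra $\pi({\mathcal A})$, and $\tau(\pi(\xi)^*\pi(\xi))=\|\xi\|^2<\infty$; hence $\{x\in{\mathcal M}:\tau(x^*x)<\infty\}$ is $\sigma$-weakly dense and $\tau$ is semi-finite. Finally, since $\tau$ is a weight, $\sigma$-weak lower semicontinuity is equivalent to normality, so it suffices to show $\tau(\sup_iT_i)=\sup_i\tau(T_i)$ for every bounded increasing net $(T_i)$ in ${\mathcal M}_+$: put $T=\sup_iT_i$ and assume $\sup_i\tau(T_i)=:\lambda<\infty$; the normalized square roots $\xi_i$, $\pi(\xi_i)=T_i^{1/2}$, satisfy $\|\xi_i\|^2=\tau(T_i)\le\lambda$, so a subnet converges weakly to some $\xi$ with $\|\xi\|^2\le\lambda$; as $T_i^{1/2}\uparrow T^{1/2}$ strongly, passing to the limit in $\pi'(\eta)\xi_i=\pi(\xi_i)\eta=T_i^{1/2}\eta$ for $\eta\in{\mathcal A}'$ gives $\pi'(\eta)\xi=T^{1/2}\eta$, so $\xi\in{\mathcal B}$, $\pi(\xi)=T^{1/2}$ and $\tau(T)=\|\xi\|^2\le\lambda$, which together with monotonicity yields $\tau(T)=\lambda$.

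I expect additivity to be the main obstacle: the normalized square roots of $S$ and $T$ recombine only as $\zeta=c^*\xi_S+d^*\xi_T$, and it is precisely the bookkeeping with the support projection $p$ — the identities $\xi_S=c\zeta$, $\xi_T=d\zeta$, $c^*c+d^*d=p$, $p\zeta=\zeta$ — that collapses $\|\zeta\|^2$ to exactly $\|\xi_S\|^2+\|\xi_T\|^2$ rather than something larger. The complete argument is \cite[Th\'eor\`eme 2.11]{com:poids associe}.
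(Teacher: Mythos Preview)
The paper does not give a proof of this lemma at all: it is stated with the citation to \cite[Th\'eor\`eme 2.11]{com:poids associe} and nothing more. Your proposal therefore goes well beyond what the paper provides, supplying a detailed and essentially correct reconstruction of Combes' argument --- well-definedness via polar decomposition and the injectivity of $\pi$ on ${\mathcal B}$, additivity via the contractions $c,d$ with $c^*c+d^*d=p$, semi-finiteness from the density of $\pi({\mathcal A})$, and normality by extracting a weak limit of the square-root vectors $\xi_i$. One minor remark: you reduce $\sigma$-weak lower semicontinuity to normality on bounded increasing nets by quoting their equivalence; only the easy direction (normal $\Rightarrow$ $\sigma$-weakly lsc, via writing $\tau$ as a supremum of normal positive functionals) is needed here, so no deep theorem is being smuggled in. Your sketch is sound and faithful to the source you cite.
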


The following lemma is certainly well-known but I did not find a reference for it.

\begin{lemma}\label{key lemma} For all $\xi\in {\mathcal B}$, one has
\[\tau(\pi(\xi)\pi(\xi)^*)=\left\{\begin{array}{ccc}
\|\xi^\sharp\|^2\,&{\rm if}&\,\xi\in {\mathcal D}^\sharp\\
\infty &{\rm otherwise}&
\end{array}\right.\]
\end{lemma}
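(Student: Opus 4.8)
The plan is to treat the two cases $\xi\in{\mathcal D}^\sharp$ and $\xi\notin{\mathcal D}^\sharp$ separately. The crucial point, needed in the first case, is to prove that \emph{for $\xi\in{\mathcal B}\cap{\mathcal D}^\sharp$ the vector $\xi^\sharp$ is again left bounded and $\pi(\xi^\sharp)=\pi(\xi)^*$.} Granting this, the first case follows at once: if $\xi\in{\mathcal B}$ and $\xi\in{\mathcal D}^\sharp$ then $\pi(\xi)\pi(\xi)^*=\pi(\xi^\sharp)^*\pi(\xi^\sharp)$ with $\xi^\sharp\in{\mathcal B}$, so by the definition of $\tau$ (and the well-definedness already quoted) $\tau(\pi(\xi)\pi(\xi)^*)=\|\xi^\sharp\|^2$.

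To prove $\pi(\xi^\sharp)=\pi(\xi)^*$, I would test $\pi(\xi)^*\eta$ (for $\eta\in{\mathcal A}'$) against an arbitrary $\zeta\in{\mathcal A}'$. Using that $\zeta$ is right bounded, that $\pi'(\zeta)^*=\pi'(\zeta^\flat)$, and the consistency relation $\pi(\xi)\zeta=\pi'(\zeta)\xi$, one gets $(\pi(\xi)^*\eta\,|\,\zeta)=(\eta\,|\,\pi'(\zeta)\xi)=(\eta\zeta^\flat\,|\,\xi)$; in the same way $(\pi'(\eta)\xi^\sharp\,|\,\zeta)=(\xi^\sharp\,|\,\zeta\eta^\flat)$. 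Since ${\mathcal A}'$ is a right Hilbert algebra, $\eta\zeta^\flat$ lies in ${\mathcal A}'\subset{\mathcal D}^\flat$ and $(\eta\zeta^\flat)^\flat=\zeta\eta^\flat$; because $\xi\in{\mathcal D}^\sharp$ and $F=S^*$, the defining property of the antilinear adjoint gives $(\xi^\sharp\,|\,\eta\zeta^\flat)=(\zeta\eta^\flat\,|\,\xi)$, and interchanging the arbitrary vectors $\eta$ and $\zeta$ rewrites this as $(\eta\zeta^\flat\,|\,\xi)=(\xi^\sharp\,|\,\zeta\eta^\flat)$. Comparing, $\pi(\xi)^*\eta=\pi'(\eta)\xi^\sharp$ for every $\eta\in{\mathcal A}'$; since ${\mathcal A}'$ is dense and $\pi(\xi)^*$ is bounded, this means precisely that $\xi^\sharp$ is left bounded and $\pi(\xi^\sharp)=\pi(\xi)^*$.

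For the case $\xi\notin{\mathcal D}^\sharp$, I would argue by contradiction. If $\tau(\pi(\xi)\pi(\xi)^*)<\infty$, then $\pi(\xi)\pi(\xi)^*=\pi(\xi')^*\pi(\xi')$ for some $\xi'\in{\mathcal B}$, whence $\|\pi(\xi)^*\psi\|=\|\pi(\xi')\psi\|$ for all $\psi\in{\mathcal H}$. Thus $\pi(\xi')\psi\mapsto\pi(\xi)^*\psi$ extends to a partial isometry $v$, equal to $0$ on the orthogonal complement of the range of $\pi(\xi')$, with $\pi(\xi)^*=v\pi(\xi')$; since $\pi(\xi)$ and $\pi(\xi')$ lie in ${\mathcal M}$ by \lemref{left bounded}, the usual check that $v$ commutes with ${\mathcal M}'$ shows $v\in{\mathcal M}$. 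Then $\pi(\xi)^*=v\pi(\xi')=\pi(v\xi')$ by \lemref{left bounded}, with $v\xi'\in{\mathcal B}$, and \lemref{adjoint} forces $\xi\in{\mathcal D}^\sharp$, contrary to our assumption. Hence $\tau(\pi(\xi)\pi(\xi)^*)=\infty$ in this case, which would complete the proof.

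I expect the identity $\pi(\xi^\sharp)=\pi(\xi)^*$ to be the main obstacle: it is the one place where the precise Tomita--Takesaki data enter (the right Hilbert algebra ${\mathcal A}'$ with its involution $\flat$, together with $F=S^*$) and where the two sesquilinear forms have to be matched by exchanging $\eta$ and $\zeta$. The second case, by contrast, is a routine manipulation with partial isometries in a von Neumann algebra, combined with Lemmas~\ref{left bounded} and \ref{adjoint}.
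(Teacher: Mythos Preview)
Your proof is correct and follows the same two-case structure as the paper. For $\xi\in{\mathcal D}^\sharp$ the paper simply invokes the full left Hilbert algebra ${\mathcal A}''={\mathcal B}\cap{\mathcal D}^\sharp$ to obtain $\pi(\xi^\sharp)=\pi(\xi)^*$, whereas you supply a direct verification of that identity; for $\xi\notin{\mathcal D}^\sharp$ both arguments manufacture a left bounded vector $\zeta$ with $\pi(\zeta)=\pi(\xi)^*$ and then appeal to \lemref{adjoint}, the paper via the polar decompositions $\pi(\xi)=U|\pi(\xi)|$ and $\pi(\eta)=V|\pi(\eta)|$ (taking $\zeta=U^*V^*\eta$), you via the Douglas-type partial isometry $v$ satisfying $\pi(\xi)^*=v\pi(\xi')$. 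These are minor technical variants of the same strategy rather than genuinely different routes.
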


\begin{proof} Suppose that $\xi\in {\mathcal D}^\sharp$. Then $\xi$ belongs to the full Hilbert algebra ${\mathcal A}''={\mathcal B}\cap {\mathcal D}^\sharp$ of $\mathcal A$. Therefore, $\xi^\sharp$ is also left bounded and $\pi(\xi)^*=\pi(\xi^\sharp)$. Thus,
 \[\tau(\pi(\xi)\pi(\xi)^*)=\tau(\pi(\xi^\sharp)^*\pi(\xi^\sharp))=\|\xi^\sharp\|^2\]
Suppose now that the left handside is finite. Then there exists $\eta\in {\mathcal B}$ such that $\pi(\xi)\pi(\xi)^*=\pi(\eta)^*\pi(\eta)$.
 Let $\pi(\xi)=U |\pi(\xi)|$ [resp. $\pi(\eta)=V |\pi(\eta)|$]  be the polar decomposition of $\pi(\xi)$ [resp.  $\pi(\eta)$]. All these operators belong to the left von Neumann algebra $\mathcal M$. Our assumption implies that $|\pi(\eta)|=U|\pi(\xi)|U^*$. Therefore,
 \[\begin{array}{lll}
(U^*V^*\pi(\eta))^*&=\pi(\eta)^*VU\\
&=|\pi(\eta)|V^*VU\\
&=|\pi(\eta)|U\\
&=U|\pi(\xi)|\\
&=\pi(\xi))\\
\end{array}\]
Moreover, according to  \lemref{left bounded}, $U^*V^*\pi(\eta)=\pi(U^*V^*\eta)$ and $\zeta=U^*V^*\eta$ is left bounded. Since $\pi(\xi)=\pi(\zeta)^*$ we deduce from \lemref{adjoint} that $\xi\in{\mathcal D}^\sharp$.
\end{proof}

We consider now the left Hilbert algebra associated with the left regular representation of a locally compact group $G$ endowed with a left Haar measure $\lambda$. Using the framework of  \cite [pages 56-58]{ren:groupoid}, we choose the left Hilbert algebra ${\mathcal A}=C_c(G)$, where the product is the usual convolution product, the involution, denoted by $f^*$ instead of $f^\sharp$, is $f^*(\gamma)=\overline{f(\gamma^{-1})}$ and the scalar product is
\[(f\,|\,g)=\int f  \overline g d\lambda^{-1}.\]
The left representation is denoted by $L$ instead of $\pi$. It acts on the Hilbert space ${\mathcal H}=L^2(G,\lambda^{-1})$ by $L(f)g=f*g$ for $f,g\in C_c(G)$. The left von Neumann algebra ${\mathcal M}$ is the group von Neumann algebra $VN(G)$. The canonical weight $\tau$ of this left Hilbert algebra is also called the Plancherel weight of the group $G$ (\cite{str:modular,tak:II}). We also refer the reader to \cite[Section 2]{haa:dual} for a deep study of its properties. It satisfies
\[\tau(L(f^**f))=\|f\|^2=\int |f|^2 d\lambda^{-1}=(f^**f)(e)\]
for $f\in C_c(G)$. When $G$ is abelian, we can use the Fourier transform to compute the Plancherel weight (see \cite[VII.3]{tak:II}). The canonical weight $\hat \tau$ of the commutative Hilbert algebra $L^2(\hat G,\hat\lambda)\cap L^\infty(\hat G)$ is the integral with respect to the dual Haar measure $\hat \lambda$ on $L^\infty(\hat G)_+$. This weight is called the Haar weight of the dual group $\hat G$. The Fourier transform $\mathcal F$ implements an isomorphism between the full left Hilbert algebra of the regular representation of $G$ and the Hilbert algebra $L^2(\hat G,\hat\lambda)\cap L^\infty(\hat G)$. Therefore, $\tau(T)=\hat\tau({\mathcal F}\circ T\circ{\mathcal F}^{-1})$ for $T\in VN(G)$. One has in particular
 \[\tau(L(a))=\int {\mathcal G}(a)d\hat\lambda,\]
 for $a\in C^*(G)_+$ where ${\mathcal G}: C^*(G)\to C_0(\hat G)$ is the Gelfand transform.\\

Let us express \lemref{key lemma} in the case of the left regular representation of a locally compact group $G$ with a left Haar measure $\lambda$.

\begin{corollary}\label{key} For all left bounded element $\xi$ of $L^2(G,\lambda^{-1})$, one has 
\[\tau(L(\xi)L(\xi)^*)=\int |\xi|^2 d\lambda.\]
\end{corollary}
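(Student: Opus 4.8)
The plan is to specialize \lemref{key lemma} to the left Hilbert algebra ${\mathcal A}=C_c(G)\subseteq{\mathcal H}=L^2(G,\lambda^{-1})$, for which $\pi=L$, ${\mathcal M}=VN(G)$ and the canonical weight is the Plancherel weight $\tau$. \lemref{key lemma} then asserts that, for left bounded $\xi$, $\tau(L(\xi)L(\xi)^*)=\|\xi^\sharp\|^2$ if $\xi\in{\mathcal D}^\sharp$ and $\tau(L(\xi)L(\xi)^*)=\infty$ otherwise, so the corollary reduces to two facts: (a) if $\xi\in{\mathcal D}^\sharp$ then $\xi^\sharp(\gamma)=\overline{\xi(\gamma^{-1})}$ for $\lambda^{-1}$-almost every $\gamma$; and (b) if $\xi$ is left bounded and $\int|\xi|^2\,d\lambda<\infty$ then $\xi\in{\mathcal D}^\sharp$. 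Granting these, from (a) and the definition of $\lambda^{-1}$ as the image of $\lambda$ under inversion one gets $\|\xi^\sharp\|^2=\int|\xi(\gamma^{-1})|^2\,d\lambda^{-1}(\gamma)=\int|\xi|^2\,d\lambda$, which is then automatically finite; whereas if $\xi\notin{\mathcal D}^\sharp$, the contrapositive of (b) forces $\int|\xi|^2\,d\lambda=\infty$. In both cases $\tau(L(\xi)L(\xi)^*)=\int|\xi|^2\,d\lambda$, the right-hand side being read as $\infty$ when $\xi\notin L^2(G,\lambda)$.

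For (a), I would use that, by definition, $S$ is the closure of the involution $a\mapsto a^*$, $a^*(\gamma)=\overline{a(\gamma^{-1})}$, on $C_c(G)$. Given $\xi\in{\mathcal D}^\sharp$, choose $f_n\in C_c(G)$ with $f_n\to\xi$ and $f_n^*\to\xi^\sharp$ in $L^2(G,\lambda^{-1})$ and pass to a subsequence along which both converge $\lambda^{-1}$-almost everywhere. Since inversion carries $\lambda^{-1}$-null sets to $\lambda^{-1}$-null sets ($\lambda^{-1}$ is equivalent to $\lambda$ and is its image under inversion), it follows that $\overline{f_n(\gamma^{-1})}\to\overline{\xi(\gamma^{-1})}$ for $\lambda^{-1}$-almost every $\gamma$, whence $\xi^\sharp(\gamma)=\overline{\xi(\gamma^{-1})}$ almost everywhere.

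For (b), put $\xi^*(\gamma)=\overline{\xi(\gamma^{-1})}$; the hypothesis $\int|\xi|^2\,d\lambda<\infty$ says exactly that $\xi^*\in L^2(G,\lambda^{-1})$. I would then verify, by a change of variables (using the relation $d\lambda^{-1}=\Delta^{-1}\,d\lambda$, where $\Delta$ is the modular function of $G$), the identity $(\eta\,|\,\xi*a)=(\xi^**\eta\,|\,a)$ for $\eta,a\in C_c(G)$. Since $\xi$ is left bounded, $L(\xi)a=\xi*a$ for $a\in C_c(G)$, so this identity says $L(\xi)^*\eta=\xi^**\eta=\pi'(\eta)\xi^*$ for $\eta\in C_c(G)$, and by a standard approximation both sides agree at every right bounded $\eta$. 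Hence $\xi^*$ is left bounded with $L(\xi^*)=L(\xi)^*$, and \lemref{adjoint} yields $\xi\in{\mathcal D}^\sharp$.

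The one genuinely delicate point is (b): it amounts to the assertion that $C_c(G)$ is rich enough inside ${\mathcal D}^\sharp$ that the closed involution $S$ of the left regular representation is the maximal one $\xi\mapsto\big(\gamma\mapsto\overline{\xi(\gamma^{-1})}\big)$. This is part of the classical description of the modular structure of $VN(G)$ (see \cite{tak:II}, \cite{str:modular}, or \cite[Section 2]{haa:dual}); the only care needed is the bookkeeping with the modular function in the change of variables and in the passage from $C_c(G)$ to arbitrary right bounded vectors. Everything else — the almost-everywhere convergence argument for (a) and the substitution into \lemref{key lemma} — is routine.
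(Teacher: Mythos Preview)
Your proposal is correct and follows the same approach as the paper: the corollary is obtained by specializing \lemref{key lemma} to the left Hilbert algebra $C_c(G)$, and the paper's remark after the corollary points to exactly the ingredient you use in (b), namely the identity $L(\xi)^*\eta=\xi^**\eta$ (attributed to \cite[Lemme~3.1]{eym:Fourier}), from which \lemref{adjoint} gives $\xi\in{\mathcal D}^\sharp$. You have simply made explicit the two facts (a) and (b) that the paper leaves implicit in passing from \lemref{key lemma} to the group case.
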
 

\begin{remark}
One can give a more direct proof of this result  since the equality $L(\xi)^*\eta=\xi^**\eta$ can be established by usual integration techniques and \cite[Lemme 3.1]{eym:Fourier}. However, its natural framework is Tomita-Takesaki's theory.
\end{remark}
 
\section{The Plancherel weight of a group bundle}

We consider now the case of a locally compact group bundle $p:G\to X$. We assume that the groups $G_x$ have a left Haar measure $\lambda^x$ such that $x\mapsto \lambda^x$ is continuous. This is the particular case of a locally compact groupoid with Haar system where the range and source maps coincide. Thus, we can construct its C*-algebra $C^*(G)$ as usual. If the groups $G_x$ are abelian, it is a commutative C*-algebra. Then the Gelfand transform identifies it with $C_0(\hat G)$. The space $\hat G$ is the total space of the bundle $\hat p:\hat G\to X$, where the fibre $\hat G_x$ is the dual group of $G_x$. For $a\in C^*(G)_+$ and $x\in X$, we define
\[{\mathcal T}(a)(x)\defequal\int {\mathcal G}(a)d\widehat{\lambda^x}=\int {\mathcal G}_x(a_x)d\widehat{\lambda^x}=\tau_x\circ L_x(a_x),\]
where $L_x$ is the left regular representation of $G_x$ and $\tau_x$ is the Plancherel weight of $G_x$.
The last expression is defined when the groups are non-abelian and we turn now to this case.

\begin{lemma} Let $G\to X$ be a locally compact group bundle with Haar system $\lambda=(\lambda^x)_{x\in X}$. Then,
\begin{enumerate}
\item $x\mapsto C^*(G_x)$ is an upper semi-continuous field of C*-algebras;
\item its sectional algebra is $C^*(G)$.
 \end{enumerate}
 \end{lemma}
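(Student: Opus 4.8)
The plan is to realise $C^*(G)$ as a $C_0(X)$-algebra and then to identify its fibres with the C*-algebras $C^*(G_x)$. First I would check that $(\phi\cdot f)(\gamma)=\phi(p(\gamma))f(\gamma)$, for $\phi\in C_c(X)$ and $f\in C_c(G)$, defines a nondegenerate $*$-homomorphism from $C_0(X)$ into the centre of the multiplier algebra of $C^*(G)$: since $p(\gamma\eta)=p(\gamma)=p(\eta)$ on $G\times_X G$, the scalar $\phi(p(\gamma))$ factors out of the convolution integral, so $\phi\cdot$ commutes with left and with right convolution; compatibility with the involution is clear, and nondegeneracy is immediate since $\phi\cdot f=f$ whenever $\phi\equiv 1$ on $p(\operatorname{supp}f)$. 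Hence $C^*(G)$ is a $C_0(X)$-algebra, and by the standard correspondence between $C_0(X)$-algebras and upper semi-continuous fields of C*-algebras (Nilsen; see also Appendix C of Williams' book on crossed products) it is the section algebra of an upper semi-continuous field $(A_x)_{x\in X}$ with fibre $A_x=C^*(G)/I_x$, where $I_x=\overline{C_0(X\setminus\{x\})\,C^*(G)}$. This establishes (1) and reduces (2) to an isomorphism $A_x\cong C^*(G_x)$ intertwining the evaluation maps.

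For this, consider the restriction map $r_x\colon C_c(G)\to C_c(G_x)$, $f\mapsto f|_{G_x}$. Because the range and source maps of $G$ both equal $p$, restricting the convolution and the involution of $G$ to the fibre $G_x$ produces exactly the convolution and involution of $G_x$ attached to $\lambda^x$, so $r_x$ is a $*$-homomorphism; it is onto $C_c(G_x)$ by the Tietze extension theorem, $G_x$ being closed in $G$. Moreover $r_x$ is contractive for the universal C*-norms: if $\rho$ is a $*$-representation of $C^*(G_x)$ then $\rho\circ r_x$ is a $*$-representation of $C_c(G)$, whence $\|\rho(r_xf)\|\le\|f\|_{C^*(G)}$, and taking the supremum over $\rho$ gives $\|r_xf\|_{C^*(G_x)}\le\|f\|_{C^*(G)}$. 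So $r_x$ extends to a surjective $*$-homomorphism $\bar r_x\colon C^*(G)\to C^*(G_x)$; since $\bar r_x$ annihilates $C_c(X\setminus\{x\})\cdot C_c(G)$, which spans a dense subspace of $I_x$, it descends to a surjection $q_x\colon A_x\to C^*(G_x)$.

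It remains to invert $q_x$, and the crucial point is that $J_x:=\{f\in C_c(G):f|_{G_x}=0\}$ is contained in $I_x$. Given such an $f$, a compactness argument using $f\in C_c(G)$ and $f|_{G_x}=0$ shows that for each $\varepsilon>0$ there is an open neighbourhood $V$ of $x$ with $|f|<\varepsilon$ on $p^{-1}(V)$; choosing $\psi\in C_c(X\setminus\{x\})$ with $0\le\psi\le1$ and $\psi\equiv1$ on the compact set $p(\operatorname{supp}f)\setminus V$, one gets $\|f-\psi\cdot f\|_\infty\le\varepsilon$ with $\psi\cdot f$ still supported in the fixed compact set $\operatorname{supp}f$. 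Since the $C^*(G)$-norm is dominated by the $I$-norm $\max\{\sup_x\!\int|h|\,d\lambda^x,\ \sup_x\!\int|h^*|\,d\lambda^x\}$, which on functions supported in a fixed compact set is controlled by the sup-norm because the Haar system is continuous, this forces $\psi\cdot f\to f$ in $C^*(G)$; as $\psi\cdot f\in C_c(X\setminus\{x\})\cdot C_c(G)\subseteq I_x$, we conclude $f\in I_x$. Granting $J_x\subseteq I_x$, one defines $\iota_x\colon C_c(G_x)\to A_x$ by $\iota_x(g)=\tilde g+I_x$ for any $\tilde g\in C_c(G)$ with $r_x(\tilde g)=g$; it is well defined because two such extensions differ by an element of $J_x\subseteq I_x$, and it is a $*$-homomorphism because $r_x$ is. By the universal property of $C^*(G_x)$ — every $*$-representation of $C_c(G_x)$ is bounded by $\|\cdot\|_{C^*(G_x)}$ — the map $\iota_x$ extends to a $*$-homomorphism $C^*(G_x)\to A_x$; then $q_x\circ\iota_x$ is the identity on the dense subalgebra $C_c(G_x)$ of $C^*(G_x)$ and $\iota_x\circ q_x$ is the identity on the dense subalgebra $\{f+I_x:f\in C_c(G)\}$ of $A_x$, so $q_x$ is an isomorphism and $A_x\cong C^*(G_x)$.

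I expect the main obstacle to be precisely the inclusion $J_x\subseteq I_x$ and the norm bookkeeping behind it: one needs that the $C^*(G)$-norm of a compactly supported function is controlled by its sup-norm once its support is confined to a fixed compact set, which is exactly where continuity (or at least local boundedness) of the Haar system is used, and one also leans on the standard fact that $C^*(G_x)$ has the expected universal property, namely that every $*$-representation of $C_c(G_x)$ is bounded by the full C*-norm.
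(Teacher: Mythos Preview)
Your argument is correct and follows the standard $C_0(X)$-algebra route (Nilsen; Williams' crossed-products book, Appendix~C). The paper itself does not give a proof here at all: it simply refers the reader to \cite[Section~5]{lr:quantization}. So you have supplied a self-contained argument where the paper defers entirely to the literature; in effect you are reconstructing the group-bundle specialisation of what one finds in Landsman--Ramazan. There is thus no meaningful comparison of approaches to make beyond noting that your write-up makes explicit the two nontrivial ingredients (the density $\overline{J_x}=I_x$ via the bump-function/$I$-norm estimate, and the fibre identification $A_x\cong C^*(G_x)$) that the citation absorbs.

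One remark on the point you yourself flag. The assertion ``every $*$-representation of $C_c(G_x)$ is bounded by $\|\cdot\|_{C^*(G_x)}$'' is true for locally compact groups but is not purely formal, since automatic continuity is a theorem about Banach $*$-algebras and $C_c(G_x)$ is not complete. A cleaner way to close this, already implicit in your argument for $J_x\subseteq I_x$, avoids the issue entirely: given $g\in C_c(G_x)$ and an extension $\tilde g\in C_c(G)$, multiply by $\phi\in C_c(X)$ with $0\le\phi\le 1$, $\phi(x)=1$, and let $\operatorname{supp}\phi$ shrink to $\{x\}$; continuity of the Haar system gives $\|\phi\cdot\tilde g\|_I\to\|g\|_{I,G_x}$, whence $\|\iota_x(g)\|_{A_x}\le\|g\|_{I,G_x}$. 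Thus $\iota_x$ is $I$-norm bounded and extends to $C^*(G_x)$ by the usual universal property for $I$-bounded representations, without appealing to anything about arbitrary $*$-representations of $C_c(G_x)$.
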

\begin{proof}
 See for example \cite[Section 5]{lr:quantization}.
\end{proof}

Equivalently, this lemma says that $C^*(G)$ is a $C_0(X)$-C*-algebra. We shall view an element $a$ of $C^*(G)$ as a continuous field $x\mapsto a_x$, where $a_x\in C^*(G_x)$. For $a\in C^*(G)_+$, we define as in the commutative case the function 
\[{\mathcal T}(a): x\mapsto \tau_x(L_x(a_x)),\] 
where $L_x$ is the left regular representation of $C^*(G_x)$ and $\tau_x$ is the Plancherel weight of $G_x$. Our main result will be that the function $x\mapsto {\mathcal T}(a)(x)$ is lower semi-continuous. We consider now weights on C*-algebras rather than von Neumann algebras and refer the reader to  \cite{com:poids} for the main definitions and results.
\begin{lemma}
 Let $\varphi:A_+\to [0,\infty]$ be a weight on a C*-algebra $A$. Assume that $(e_i)_{i\in I}$ is an approximate identity for $A$ such that $\|e_i\|\le 1$ for all $i\in I$. Let $a\in A_+$. Then,
\begin{enumerate}
 \item $\varphi(a^{1/2}e_i a^{1/2})\le \varphi(a)$;
 \item  if $\varphi$ is lower semi-continuous, then $\varphi(a)=\sup_i\varphi(a^{1/2}e_i a^{1/2})$.
\end{enumerate}
\end{lemma}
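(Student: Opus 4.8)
The plan is to get (i) from the monotonicity of weights and (ii) by combining (i) with the lower semi-continuity of $\varphi$ along the norm-convergent net $a^{1/2}e_ia^{1/2}\to a$.

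First I would record the elementary fact that any weight is monotone: if $0\le b\le c$ in $A_+$, then $c-b\in A_+$ and additivity gives $\varphi(c)=\varphi(b)+\varphi(c-b)\ge\varphi(b)$, with the convention $\infty+t=\infty$. Next, since $(e_i)_{i\in I}$ is an approximate unit of the C*-algebra $A$, each $e_i$ is positive with $\|e_i\|\le 1$, hence $0\le e_i\le 1$ in the unitization $\tilde A$; conjugating by $a^{1/2}$ yields $a-a^{1/2}e_ia^{1/2}=a^{1/2}(1-e_i)a^{1/2}\ge 0$, an element of $A_+$. Thus $a^{1/2}e_ia^{1/2}\le a$, and monotonicity gives (i).

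For (ii) I would first check the norm convergence $a^{1/2}e_ia^{1/2}\to a$: since $\|e_ia^{1/2}-a^{1/2}\|\to 0$ by the approximate-unit property, one has $\|a-a^{1/2}e_ia^{1/2}\|=\|a^{1/2}(a^{1/2}-e_ia^{1/2})\|\le\|a^{1/2}\|\,\|a^{1/2}-e_ia^{1/2}\|\to 0$. Lower semi-continuity of $\varphi$ (in the sense of \cite{com:poids}) means each sublevel set $\{b\in A_+:\varphi(b)\le\alpha\}$ is norm-closed; a routine cofinal-subnet argument upgrades this to $\varphi(b)\le\liminf_j\varphi(b_j)$ for every norm-convergent net $b_j\to b$ in $A_+$. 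Applying this to $b_i=a^{1/2}e_ia^{1/2}$ gives $\varphi(a)\le\liminf_i\varphi(a^{1/2}e_ia^{1/2})\le\sup_i\varphi(a^{1/2}e_ia^{1/2})$, while the reverse inequality is precisely (i); hence equality.

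The only mildly delicate point is the passage from norm-closedness of the sublevel sets to the net inequality $\varphi(b)\le\liminf_j\varphi(b_j)$, but this is standard and, in particular, does not require the approximate unit to be increasing. If one does use the canonical increasing approximate unit, then $i\mapsto\varphi(a^{1/2}e_ia^{1/2})$ is itself an increasing net and the supremum in (ii) is a genuine limit; everything else is a direct computation in $\tilde A$.
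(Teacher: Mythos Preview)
Your proof is correct and follows the same route as the paper's: part (i) via the inequality $a^{1/2}e_ia^{1/2}\le a$ and monotonicity of $\varphi$, and part (ii) via norm convergence $a^{1/2}e_ia^{1/2}\to a$ together with lower semi-continuity to sandwich $\varphi(a)$ between $\liminf_i$ and $\sup_i$. You simply spell out the intermediate steps (monotonicity from additivity, the unitization argument for $e_i\le 1$, the passage from closed sublevel sets to the $\liminf$ inequality) that the paper leaves implicit.
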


\begin{proof}
For (i), we have $a^{1/2}e_i a^{1/2}\le\|e_i\| a\le a$. Since $\varphi$ is increasing, we obtain the desired inequality.
For (ii), since $a^{1/2}e_i a^{1/2}$ converges to $a$, we have
\[\varphi(a)\le \liminf\varphi(a^{1/2}e_i a^{1/2})\le \sup\varphi(a^{1/2}e_i a^{1/2})\le \varphi(a)\]
\end{proof}

\begin{proposition}\label{main} Let $G\to X$ be a locally compact group bundle with Haar system $\lambda=(\lambda^x)_{x\in X}$. Given $a\in C^*(G)_+$, the function 
\[{\mathcal T}(a):x\mapsto \tau_x(L_x(a_x))\]
 is lower semi-continuous.
 \end{proposition}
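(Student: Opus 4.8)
The plan is to exhibit $\mathcal T(a)$ as a pointwise supremum, over a \emph{fixed} index set, of continuous real‑valued functions on $X$; since a pointwise supremum of lower semi‑continuous functions is lower semi‑continuous, this proves the proposition. The continuity of the individual functions will come from \lemref{key lemma} (and \corref{key}) applied fibrewise, together with the continuity of the Haar system, while the reduction to that situation uses the last lemma.

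So fix an approximate identity $(u_i)_{i\in I}$ of $C^*(G)$ with $u_i\in C_c(G)$ and $\|u_i\|\le 1$, and set $e_i:=u_i^* * u_i\in C_c(G)_+$, so that $(e_i)$ is again an approximate identity with $\|e_i\|\le 1$. Since $C^*(G)$ is a $C_0(X)$‑C*‑algebra with fibre $C^*(G_x)$ at $x$, the fibre map $C^*(G)\to C^*(G_x)$ is surjective; applying it to $e_ib\to b$ shows that $(e_{i,x})_{i\in I}$ is an approximate identity of $C^*(G_x)$, with $\|e_{i,x}\|\le 1$. As $\tau_x$ is $\sigma$‑weakly lower semi‑continuous on $VN(G_x)_+$ and $L_x$ is a $*$‑homomorphism, the weight $\tau_x\circ L_x$ on $C^*(G_x)_+$ is norm lower semi‑continuous; hence the last lemma gives
\[\mathcal T(a)(x)=\tau_x\big(L_x(a_x)\big)=\sup_{i\in I}\tau_x\big(L_x(a_x^{1/2}e_{i,x}a_x^{1/2})\big)=\sup_{i\in I}\mathcal T\big(a^{1/2}e_ia^{1/2}\big)(x).\]
It is therefore enough to show that, for each fixed $i$, the function $x\mapsto\mathcal T(a^{1/2}e_ia^{1/2})(x)$ is finite and continuous.

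To evaluate it I would use the machinery of \S1. Since $u_{i,x}^*\in C_c(G_x)$ is a left bounded element of the Hilbert algebra $C_c(G_x)\subseteq L^2(G_x,(\lambda^x)^{-1})$ and $L_x(a_x^{1/2})\in VN(G_x)$, \lemref{left bounded} shows that $\eta_{i,x}:=L_x(a_x^{1/2})\,u_{i,x}^*$ is left bounded with $L_x(\eta_{i,x})=L_x(a_x^{1/2})L_x(u_{i,x}^*)$, whence
\[L_x(\eta_{i,x})L_x(\eta_{i,x})^*=L_x(a_x^{1/2})L_x(u_{i,x}^**u_{i,x})L_x(a_x^{1/2})=L_x(a_x^{1/2}e_{i,x}a_x^{1/2}).\]
Picking $f_k\in C_c(G)$ with $f_k=f_k^*$ and $f_k\to a^{1/2}$ in $C^*(G)$, the elements $\xi_{k,x}:=(f_k*u_i^*)_x=L_x(f_{k,x})u_{i,x}^*\in C_c(G_x)$ converge to $\eta_{i,x}$ in $L^2(G_x,(\lambda^x)^{-1})$ and satisfy $\xi_{k,x}^\sharp=\xi_{k,x}^*=u_{i,x}*f_{k,x}=(u_i*f_k)_x$; since right convolution by an element $\eta$ is bounded with norm at most $\|\eta\|_{C^*(G_x)}$ on $L^2(G_x,(\lambda^x)^{-1})$ (the right Hilbert algebra of \S1), the sequence $(\xi_{k,x}^\sharp)_k$ is Cauchy, so $\eta_{i,x}$ lies in the domain $\mathcal D^\sharp$ of the closed involution of $C_c(G_x)$, with $\eta_{i,x}^\sharp=\lim_k (u_i*f_k)_x$. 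By \lemref{key lemma},
\[\mathcal T\big(a^{1/2}e_ia^{1/2}\big)(x)=\tau_x\big(L_x(\eta_{i,x})L_x(\eta_{i,x})^*\big)=\big\|\eta_{i,x}^\sharp\big\|^2_{L^2(G_x,(\lambda^x)^{-1})}<\infty .\]

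The crux, and the step I expect to need the most care, is the continuity of $x\mapsto\|\eta_{i,x}^\sharp\|^2$. For each $k$ the function $x\mapsto\|(u_i*f_k)_x\|^2_{L^2(G_x,(\lambda^x)^{-1})}=\int|(u_i*f_k)_x|^2\,d(\lambda^x)^{-1}$ is continuous, because $|u_i*f_k|^2\in C_c(G)$, the Haar system $(\lambda^x)$ is continuous, and inversion is a homeomorphism of $G$ (so $x\mapsto\int h\,d(\lambda^x)^{-1}$ is continuous for $h\in C_c(G)$). Moreover, using again the contractivity of right convolution,
\[\sup_{x\in X}\big\|(u_i*f_k)_x-\eta_{i,x}^\sharp\big\|_{L^2(G_x,(\lambda^x)^{-1})}\ \le\ \|f_k-a^{1/2}\|_{C^*(G)}\cdot\sup_{x\in X}\|u_{i,x}\|_{L^2(G_x,(\lambda^x)^{-1})},\]
and the last supremum is finite because $x\mapsto\|u_{i,x}\|^2_{L^2(G_x,(\lambda^x)^{-1})}=\int|u_i|^2\,d(\lambda^x)^{-1}$ is continuous and supported over a compact subset of $X$. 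Hence $\mathcal T(a^{1/2}e_ia^{1/2})$ is a uniform limit of continuous functions, so it is continuous, and $\mathcal T(a)=\sup_{i\in I}\mathcal T(a^{1/2}e_ia^{1/2})$ is lower semi‑continuous.
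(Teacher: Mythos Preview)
Your argument is correct and follows the same overall architecture as the paper's proof: reduce via an approximate identity $(e_i)$ of the form $u_i^**u_i$ (the paper uses finite sums of $f*f^*$ from \cite{mrw:equivalence}, which amounts to the same thing), apply the lower semi-continuity of $\tau_x\circ L_x$ to write $\mathcal T(a)$ as a supremum, and then invoke \lemref{key lemma} fibrewise on the left bounded vector $\eta_{i,x}=L_x(a_x^{1/2})u_{i,x}^*$.

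The genuine difference lies in the last step. The paper expresses the value as $\int|g_x|^2\,d\lambda^x$ via \corref{key} and then, because $g$ lives in the Hilbert module $L^2(G,\lambda^{-1})$, must pass from $\lambda_x$ to $\lambda^x$; in the non-unimodular case this forces an approximation of the modular function $D$ from below by $D_n\in C_c(G)$, and one only obtains lower semi-continuity of the intermediate functions $x\mapsto\tau_x(L_x(a_x^{1/2}(f*f^*)_xa_x^{1/2}))$. You instead stay on the $\sharp$ side and approximate $\eta_{i,x}^\sharp$ in $L^2(G_x,\lambda_x)$ by $(u_i*f_k)_x\in C_c(G_x)$, using that right convolution by $h$ on $L^2(G_x,\lambda_x)$ has norm $\le\|h\|_{C^*(G_x)}$ (which is correct: under the unitary $W\colon L^2(\lambda_x)\to L^2(\lambda^x)$, $\xi\mapsto\xi\circ\iota$, right convolution by $h$ becomes left convolution by $\check h$, and $\|\check h\|_{C^*_r}=\|h\|_{C^*_r}$). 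This yields \emph{uniform} convergence in $x$, hence outright continuity of each $x\mapsto\mathcal T(a^{1/2}e_ia^{1/2})(x)$, with no unimodular/non-unimodular case distinction. So your route is a bit sharper on the intermediate functions and sidesteps the modular-function argument entirely; the paper's route, on the other hand, makes the role of $D$ explicit and ties in directly with \corref{key}.
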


\begin{proof} Proposition 2.10 of \cite{mrw:equivalence} gives the existence of an approximate unit $(e_i)$ of $C^*(G)$ where each $e_i$ is a finite sum of elements of the form $f*f^*$ with $f\in C_c(G)$; moreover, according to its construction,  $\|e_i\|_I$ tends to 1, where $\|f\|_I=\max{(\sup_x\int |f| d\lambda^x, \sup_x\int |f| d\lambda_x)}$ and $\lambda_x=(\lambda^x)^{-1}$. Replacing $e_i$ by $(1/\|e_i\|_I)e_i$, we can assume that $\|e_i\|_I=1$ for all $i\in I$. We then have $\|e_i\|\le\|e_i\|_I\le 1$. For all $x\in X$, the image $(e_i(x))$ of $(e_i)$ in $C^*(G_x)$ satisfies the same assumptions. Since the Plancherel weight $\tau_x$ is $\sigma$-weakly lower semi-continuous, we have
\[\tau_x(L_x(a_x))=\sup_i \tau_x(L_x(a_x^{1/2}e_i(x) a_x^{1/2})).\]
We will show that the function $x\mapsto \tau_x(L_x(a_x^{1/2}e_i(x) a_x^{1/2}))$ is lower semi-continuous. This will imply that the function $x\mapsto \tau_x(L_x(a_x))$ is lower semi-continuous as lower upper bound of a family of lower semi-continuous functions. It suffices to show that $x\mapsto \tau_x(L_x(a_x^{1/2}*(f*f^*)_x *a_x^{1/2}))$ is lower semi-continuous for all $f\in C_c(G)$. Let us fix $x\in X$. According to \lemref{left bounded}, $f_x$ and $g_x=L_x(a_x^{1/2})f_x$ are left bounded elements of $L^2(G_x,\lambda_x)$. Thus we have according to \lemref{key lemma} :
\[\tau_x(L_x(a_x^{1/2}*(f*f^*)_x *a_x^{1/2})=\tau_x(L_x(g_x)L_x(g_x)^*)=\int |g_x(\gamma)|^2 d\lambda^x(\gamma).\]
Note that $g:x\mapsto g_x$ is an element of the $C_0(X)$-Hilbert module $L^2(G,\lambda^{-1})$ (see \cite[Section 2]{ks:regular}). If the groups $G_x$ are unimodular, then
\[\int |g(\gamma)|^2 d\lambda^x(\gamma)=\int |g(\gamma)|^2 d\lambda_x(\gamma)\]
depends continuously on $x$. If not, we first observe that for all $\rho\in C_c(G)$, $\rho g$ belongs to $L^2(G,\lambda^{-1})$.
 Let $D: G\to\R_+^*$ be the modular function of the group bundle $G$. It is a continuous homomorphism such that
$\lambda^x=D\lambda_x$ for all $x\in X$ (see \cite[Lemma 2.4]{ikrsw:extensions}). There exists an increasing sequence $(D_n)$ of continuous non-negative functions with compact support which converges pointwise to $D$. Then, for all $x\in X$, $\int D_n(\gamma)|g(\gamma)|^2 d\lambda_x(\gamma)$ is an increasing sequence which converges to
\[\int D(\gamma)|g(\gamma)|^2 d\lambda_x(\gamma)=\int |g(\gamma)|^2d\lambda^x(\gamma)\,.\] 
Therefore, the function $x\mapsto \int |g(\gamma)|^2d\lambda^x(\gamma)$ is lower semi-continuous as a limit of an increasing sequence of continuous functions.
\end{proof}

\section{$C_0(X)$-weights on $C_0(X)$-C*-algebras}

In this section, given a topological space $X$, $LSC(X)_+$ denotes the convex cone of lower semi-continuous functions $f:X\to [0,\infty]$. 

\begin{definition}\label{C_0-weight} Let $X$ be a locally compact Hausdorff space. A $C_0(X)$-weight on a $C_0(X)$-C*-algebra $A$ is a map
\[\Phi: A_+\to LSC(X)_+\]
such that
\begin{enumerate}
 \item $\Phi(a+b)=\Phi(a)+\Phi(b)$ for all $a,b\in A_+$;
 \item $\Phi(ha)=h\Phi(a)$ for all $h\in C_0(X)_+$ and $a\in A_+$.
\end{enumerate}
It is called lower semi-continuous if $a_n\to a$ implies $\Phi(a)\le\liminf\Phi(a_n)$ and densely defined if its domain, defined as
\[P=\{a\in A_+: \Phi(a) \, \hbox{is finite and continuous}\}\]
is dense in $A_+$.  
\end{definition}

In order to include the first example, we need to modify slightly this definition. Given a topological space $X$, we denote by $C_b(X)$ the space of complex-valued bounded continuous functions on $X$. We say that a C*-algebra $A$ is a $C_b(X)$-algebra if it is endowed with a nondegenerate morphism of $C_b(X)$ into the centre of the multiplier algebra of $A$. Then we define a $C_b(X)$-weight by replacing $C_0(X)$ by $C_b(X)$ in the above definition.

\begin{remark} The above definition of a $C_0(X)$-weight on a $C_0(X)$-C*-algebra is motivated by the  examples below; it is different from the definition of a C*-valued weight given in \cite{kus:regular}.  \end{remark}

\begin{lemma}
 The domain $P$ of a $C_b(X)$-weight or of $C_0(X)$-weight is hereditary: if $0\le b\le a$ and $a\in P$, then $b\in P$.
\end{lemma}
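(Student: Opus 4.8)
The plan is to deduce heredity from additivity (axiom (i)) together with the fact that a weight takes its values in $LSC(X)_+$; axiom (ii) will play no role, so the argument is the same for a $C_b(X)$-weight and a $C_0(X)$-weight. First I would record monotonicity: if $0\le b\le a$ then $a-b\in A_+$, and axiom (i) gives $\Phi(a)=\Phi(b)+\Phi(a-b)$ with $\Phi(a-b)$ taking values in $[0,\infty]$, hence $\Phi(b)\le\Phi(a)$ pointwise. Assuming $a\in P$, so that $\Phi(a)$ is finite (and continuous), this forces $\Phi(b)$ to be finite everywhere, and then $\Phi(a-b)=\Phi(a)-\Phi(b)$ is finite too. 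This finiteness bookkeeping is the only delicate point — the subtraction below is legitimate precisely because $\Phi(a)$ is finite — so I would dispatch it before touching semi-continuity.

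Next I would exploit the decomposition $\Phi(b)=\Phi(a)-\Phi(a-b)$. By hypothesis $\Phi(a)$ is continuous, and by the definition of a weight $\Phi(a-b)$ is lower semi-continuous, so $-\Phi(a-b)$ is upper semi-continuous; therefore $\Phi(b)$, being the sum of a continuous function and an upper semi-continuous function, is upper semi-continuous. On the other hand $\Phi(b)$ is lower semi-continuous, again by the definition of a weight. A finite real-valued function that is both lower and upper semi-continuous is continuous, so $\Phi(b)$ is finite and continuous, i.e. $b\in P$.

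I do not expect a genuine obstacle here: the whole content is that a lower semi-continuous function which differs from a continuous function by another lower semi-continuous function is automatically upper semi-continuous, hence continuous. The only thing to be careful about is to carry the finiteness of $\Phi(a)$ through the argument so that the decomposition $\Phi(b)=\Phi(a)-\Phi(a-b)$ makes sense in $\R$ rather than merely in $[0,\infty]$.
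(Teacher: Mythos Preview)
Your argument is correct and is precisely the one the paper has in mind: the paper's proof consists only of the reference ``See \cite[Lemme 4.4.2.i]{dix:C*}'', and the argument there is exactly the additivity-plus-semicontinuity trick you have written out, transported from scalar-valued weights to $LSC(X)_+$-valued ones. You have supplied the one extra step needed in this setting---namely that $\Phi(b)=\Phi(a)-\Phi(a-b)$ is upper semi-continuous as well as lower semi-continuous, hence continuous---which is implicit in the paper's citation.
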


\begin{proof} See \cite[Lemme 4.4.2.i]{dix:C*}.
 
\end{proof}

\begin{example} {\it The canonical center-valued trace}\label{center} (cf \cite[Section 3.4]{dix:VN}). Let $A$ be a C*-algebra and let $X={\rm Prim}(A)$ its primitive ideal space endowed with the Jacobson topology. Through the Dauns-Hofmann theorem, we view $A$ as a $C_b(X)$-algebra. Given $a\in A_+$, according to \cite[Proposition 4.4.9]{ped:C*}, one can define $\Phi(a)(P)=\Tr(\pi(a))$ where $P$ is a primitive ideal and $\pi$ is any irreducible representation admitting $P$ as kernel, and $\Phi(a)$ is lower semi-continuous on $X$. Condition (i) of the definition results from the additivity of the trace and condition (ii) from the irreducibility of $\pi$. Thus $\Phi: A_+\to LSC(X)_+$ is a $C_b(X)$-weight. Since the usual $\Tr$ is $\sigma$-weakly lower semi-continuous, $\Phi$ is lower semi-continuous. By definition, $\Phi$ is densely defined if and only if $A$ is a continuous-trace C*-algebra.
 \end{example}
 
\begin{example}\label{group bundle} {\it The Plancherel $C_0(X)$-weight of a group bundle}. This is the example described in the previous section: 

\begin{theorem}\label{group bundle} Let $p:G\to X$ be a locally compact group bundle endowed with a continuous Haar system. Then the map
 \[{\mathcal T}: C^*(G)_+\to LSC(X)_+\]
 such that ${\mathcal T}(a)(x)=\tau_x(L_x(a_x))$ for $a\in C^*(G)_+$ and where $\tau_x$ is the Plancherel weight of $G_x$ and $L_x$ the left regular representation of $G_x$ is a densely defined lower semi-continuous $C_0(X)$-weight, which we call the Plancherel $C_0(X)$-weight of the group bundle.
\end{theorem}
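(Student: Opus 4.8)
The plan is to check the four requirements of \defnref{C_0-weight}—values in $LSC(X)_+$, the two weight axioms, lower semi-continuity, and dense definition—one by one, the analytic heart of the matter being already contained in \propref{main}. That $\mathcal T(a)\in LSC(X)_+$ for $a\in C^*(G)_+$ is exactly the statement of \propref{main}. For axioms (i) and (ii) I would use that for each $x\in X$ the evaluation $a\mapsto a_x$ is a $C_0(X)$-linear $*$-homomorphism $C^*(G)\to C^*(G_x)$, i.e. $(a+b)_x=a_x+b_x$ and $(ha)_x=h(x)a_x$; composing with the $*$-homomorphism $L_x$ and then with $\tau_x$, additivity of $\mathcal T$ follows from additivity of $\tau_x$ on $VN(G_x)_+$, and the homogeneity $\mathcal T(ha)=h\,\mathcal T(a)$ follows from the positive homogeneity of $\tau_x$ together with $h(x)\ge 0$ for $h\in C_0(X)_+$.

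For lower semi-continuity in the sense of \defnref{C_0-weight}, suppose $a_n\to a$ in $C^*(G)_+$. Since each evaluation map is norm-decreasing, $(a_n)_x\to a_x$ in $C^*(G_x)_+$ for every $x$, whence $L_x((a_n)_x)\to L_x(a_x)$ in operator norm and a fortiori $\sigma$-weakly in $VN(G_x)$. The $\sigma$-weak lower semi-continuity of the Plancherel weight $\tau_x$ then gives $\mathcal T(a)(x)=\tau_x(L_x(a_x))\le\liminf_n\tau_x(L_x((a_n)_x))=\liminf_n\mathcal T(a_n)(x)$ for every $x$, which is precisely $\mathcal T(a)\le\liminf_n\mathcal T(a_n)$.

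The remaining point is that $\mathcal T$ is densely defined, and here the plan is to exhibit a concrete dense family inside the domain $P$. For $g\in C_c(G)$ the element $g^**g$ is positive in $C^*(G)$ and again lies in $C_c(G)$; since restriction to a fibre is a $*$-homomorphism, $(g^**g)_x=(g_x)^**(g_x)$, so applying the identity $\tau(L(f^**f))=(f^**f)(e)$ in the fibre $G_x$ yields $\mathcal T(g^**g)(x)=(g^**g)(e_x)$, the value of the $C_c(G)$-function $g^**g$ along the unit section $x\mapsto e_x$. Since that section is continuous and $g^**g$ is continuous with compact support, $\mathcal T(g^**g)$ is bounded and continuous, so $g^**g\in P$. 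Finally, for arbitrary $a\in C^*(G)_+$, approximating $a^{1/2}$ in norm by some $g\in C_c(G)$ gives $g^**g\to a$ in norm; thus $\{g^**g:g\in C_c(G)\}\subseteq P$ is dense in $C^*(G)_+$, and $P$ is dense.

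Since the hard estimate—lower semi-continuity of $x\mapsto\mathcal T(a)(x)$—is \propref{main}, which I may invoke, I expect no serious obstacle here; the one spot calling for a little care is the identification $\mathcal T(g^**g)(x)=(g^**g)(e_x)$ and the continuity of $x\mapsto(g^**g)(e_x)$, both of which rest on the standing assumption that the Haar system is continuous (so that $g^**g\in C_c(G)$ and the unit section of $G\to X$ is continuous).
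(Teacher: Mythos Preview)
Your proof is correct and follows essentially the same route as the paper's: invoke \propref{main} for the range, note that axioms (i) and (ii) are immediate from the fibrewise $*$-homomorphisms, deduce lower semi-continuity of $\mathcal T$ from the $\sigma$-weak lower semi-continuity of each $\tau_x$, and check dense definition via the elements $g^**g$ with $g\in C_c(G)$. Your treatment of dense definition is in fact slightly more explicit than the paper's: you verify directly that $\mathcal T(g^**g)(x)=(g^**g)(e_x)$ is finite and continuous (using that $g^**g\in C_c(G)$ and the unit section is continuous), and then show $\{g^**g:g\in C_c(G)\}$ is dense in $C^*(G)_+$ by approximating $a^{1/2}$, whereas the paper simply asserts that the linear span of these elements is dense in $C^*(G)$.
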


\begin{proof}
 The main point was to show that the range of this map is contained in $LSC(X)_+$, which was done in \propref{main}. The conditions (i) and (ii) of the definition are clear. It is densely defined since its domain contains the elements $f^**f$ where $f\in C_c(G)$, whose linear span is dense in $C^*(G)$. The lower semi-continuity of $\tau_x\circ L_x$ for all $x\in X$ gives the lower semi-continuity of $\mathcal T$.
\end{proof}

\end{example}

\begin{example}\label{commutative} {\it The commutative case}. Let $\pi:Y\to X$ be a continuous, open and surjective map, where $Y$ and $X$ are locally compact Hausdorff spaces. For $x\in X$, let $Y_x=\pi^{-1}(x)$ be the fibre over $x$. Endowed with the fundamental family of continuous sections $C_c(Y)$, $x\mapsto C_0(Y_x)$ is a continuous field of C*-algebras. Its C*-algebra of continuous sections is identified to $C_0(Y)$. Thus $C_0(Y)$ is a $C_0(X)$-C*-algebra. The following proposition gives a description of the densely defined lower semi-continuous $C_0(X)$-weights of $C_0(Y)$.

\begin{proposition}\label{commutative} In the above situation, we have natural one-to-one correspondence between
\begin{enumerate}
 \item the continuous $\pi$-systems of measures, by which we mean families of Radon measures $\alpha=(\alpha_x)_{x\in X}$ on $Y$, where $\alpha_x$ is supported on $\pi^{-1}(x)$, and such that for all $f\in C_c(Y)$, the function $x\mapsto \int f d\alpha_x$ is continuous;
\item the densely defined and lower semi-continuous $C_0(X)$-weights of $C_0(Y)$.
\end{enumerate}
 \end{proposition}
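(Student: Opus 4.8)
The plan is to display the correspondence explicitly in both directions and to verify that the two constructions are mutually inverse; the only delicate point is a localisation property of lower semi-continuous $C_0(X)$-weights. From a continuous $\pi$-system $\alpha=(\alpha_x)$ I would define $\Phi_\alpha(f)(x)=\int f\,d\alpha_x$ for $f\in C_0(Y)_+$. Conditions (i) and (ii) of \defnref{C_0-weight} are immediate, (ii) holding because $\alpha_x$ is carried by $\pi^{-1}(x)$, so that $\int (h\circ\pi)f\,d\alpha_x=h(x)\int f\,d\alpha_x$. That $\Phi_\alpha(f)$ lies in $LSC(X)_+$ is seen by writing $f=\sup_n(f-1/n)_+$ with $(f-1/n)_+\in C_c(Y)_+$ and invoking monotone convergence, which exhibits $\Phi_\alpha(f)$ as a supremum of functions continuous by hypothesis; lower semi-continuity of $\Phi_\alpha$ in the sense of \defnref{C_0-weight} follows by cutting a norm-convergent sequence in $C_0(Y)_+$ against a fixed $g\in C_c(Y)_+$ with $g\equiv 1$ on a prescribed compact set, using that each $\alpha_x$ is finite on compact sets together with inner regularity. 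Finally $\Phi_\alpha$ is densely defined, since its domain contains $C_c(Y)_+$, which is dense in $C_0(Y)_+$: for $g\in C_c(Y)_+$ the function $\Phi_\alpha(g)$ is finite and, by the defining property of a continuous $\pi$-system, continuous.

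Conversely, let $\Phi$ be a densely defined lower semi-continuous $C_0(X)$-weight on $C_0(Y)$. The first step is to check that $C_c(Y)_+$ is contained in the domain $P$. Given $g\in C_c(Y)_+$, choose $f\in C_c(Y)_+$ with $0\le f\le 1$ and $f\equiv 1$ on $\operatorname{supp} g$, put $h_0=(\|g\|+1)f\in C_c(Y)_+$, use density of $P$ to find $p\in P$ with $\|p-h_0\|<1/2$; then $(h_0-\tfrac{1}{2})_+\le p$ pointwise, so $(h_0-\tfrac{1}{2})_+\in P$ because $P$ is hereditary, and since $g\le (h_0-\tfrac{1}{2})_+$ we obtain $g\in P$. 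Hence for each $x\in X$ the map $g\mapsto \Phi(g)(x)$ is a positive linear functional on $C_c(Y)$, represented by a Radon measure $\mu_x$, and $x\mapsto \int g\,d\mu_x=\Phi(g)(x)$ is continuous for every $g\in C_c(Y)$ precisely because $g\in P$. The key lemma, which I would isolate, is that $\mu_x$ is supported on $\pi^{-1}(x)$; equivalently, that $\Phi(f)(x)=0$ whenever $f\in C_0(Y)_+$ vanishes on $\pi^{-1}(x)$. To prove it, fix $\varepsilon>0$: the compact set $\{f\ge\varepsilon\}$ is disjoint from $\pi^{-1}(x)$, hence $\pi(\{f\ge\varepsilon\})$ is a compact subset of $X$ not containing $x$; choosing $h\in C_c(X)_+$ with $h\equiv 1$ on $\pi(\{f\ge\varepsilon\})$ and $h(x)=0$, one checks $(f-\varepsilon)_+\le (h\circ\pi)f$ pointwise, so that by monotonicity and condition (ii), $\Phi((f-\varepsilon)_+)(x)\le h(x)\,\Phi(f)(x)=0$. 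Letting $\varepsilon\to 0$ and using lower semi-continuity of $\Phi$ against $(f-1/n)_+\to f$ yields $\Phi(f)(x)=0$. Applying this to $g\in C_c(Y)_+$ with $\operatorname{supp} g$ disjoint from $\pi^{-1}(x)$ shows $\operatorname{supp}\mu_x\subseteq\pi^{-1}(x)$, so that $\alpha^\Phi=(\mu_x)$ is a continuous $\pi$-system.

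It remains to verify that the two assignments are mutually inverse. If $\alpha$ is a continuous $\pi$-system, the Radon measures attached to $\Phi_\alpha$ agree with the $\alpha_x$ on $C_c(Y)$, hence coincide with them by the uniqueness part of the Riesz representation theorem. If $\Phi$ is a weight, then $\Phi_{\alpha^\Phi}(g)=\Phi(g)$ for $g\in C_c(Y)_+$ by construction, and for arbitrary $f\in C_0(Y)_+$ the equality $\Phi_{\alpha^\Phi}(f)=\Phi(f)$ follows from $(f-1/n)_+\nearrow f$: monotone convergence handles $\Phi_{\alpha^\Phi}(f)$, while monotonicity of $\Phi$ gives $\Phi(f)\ge\sup_n\Phi((f-1/n)_+)$ and lower semi-continuity of $\Phi$ gives the reverse inequality. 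The main obstacle is the localisation lemma $f|_{\pi^{-1}(x)}=0\Rightarrow\Phi(f)(x)=0$: this is exactly where lower semi-continuity of $\Phi$ and the $C_0(X)$-module structure enter in an essential way (a merely additive, $C_0(X)$-homogeneous map $A_+\to LSC(X)_+$ need not localise to the fibres), and once it is available the construction of the $\mu_x$ and the verification of the bijection are routine measure theory.
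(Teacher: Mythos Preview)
Your proof is correct and follows the same overall route as the paper: construct $\Phi_\alpha$ by fibrewise integration in one direction, and recover Radon measures from $\Phi$ via the Riesz representation theorem in the other. Where you differ is in the level of detail on two points. First, the paper dispatches the inclusion $C_c(Y)_+\subseteq P$ by observing that the linear span of $P$ is a dense ideal of $C_0(Y)$ and must therefore contain the Pedersen ideal, which here is $C_c(Y)$; you give an elementary direct argument instead, which is self-contained but amounts to reproving that special case. Second, and more substantially, the paper simply asserts that one can define $\varphi_x$ on $C_0(Y_x)_+$ with $\Phi(f)(x)=\varphi_x(f|_{Y_x})$---i.e.\ that $\Phi(f)(x)$ depends only on the restriction of $f$ to the fibre---and then invokes the one-point case to obtain $\alpha_x$ on $Y_x$. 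You instead define $\mu_x$ as a Radon measure on all of $Y$ and prove your localisation lemma (using $C_0(X)$-homogeneity together with lower semi-continuity) to show $\operatorname{supp}\mu_x\subseteq\pi^{-1}(x)$. Your treatment thus makes explicit precisely the step you correctly identify as the main obstacle, which the paper leaves to the reader; in exchange, the paper's formulation via $\varphi_x$ makes the reduction to the classical single-space result cleaner once that factorisation is granted.
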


\begin{proof} When $X$ is reduced to a point, this is a well-known result given for example in the introduction of \cite{com:poids}. A crucial point which we shall use again below is that a densely defined weight on $C_0(Y)$ is necessarily finite on $C_c(Y)_+$ because the linear span of its domain of definition is a dense ideal, hence it contains the minimal dense ideal, called the Pedersen ideal, which in our case is $C_c(Y)$ (\cite[5.6.3.]{ped:C*}). The proof of the general case is essentially the same. Suppose that $\alpha=(\alpha_x)_{x\in X}$  is a continuous $\pi$-system. For $f\in C_0(Y)_+$, we define $\Phi(f)(x)=\int f d\alpha_x$. There exists an increasing sequence $(f_n)$ in $C_c(Y)_+$ converging uniformly to $f$. Therefore, $\Phi(f)=\sup_n\Phi(f_n)$ is lower semi-continuous. It is clear that the other properties of a $C_0(X)$-weight are satisfied. Since its domain contains $C_c(Y)$, $\Phi$ is densely defined. The lower semi-continuity of the Radon measures gives the lower semi-continuity of $\Phi$. Conversely, let $\Phi$ be a densely defined and lower semi-continuous $C_0(X)$ weight on $C_0(Y)$. For all $x\in X$, we can define $\varphi_x: C_0(Y_x)_+\to [0,\infty]$ such that $\Phi(f)(x)=\varphi_x(f_{|Y_x})$ for all $x$. Then $\varphi_x$ is a densely defined and lower semi-continuous weight on $C_0(Y_x)$. As we recalled at the beginning, there exists a unique Radon measure $\alpha_x$ such that $\phi_x(f)=\int f d\alpha_x$ for all $f\in C_0(Y_x)$. By assumption, the linear span of the domain of definition $P$ of $\Phi$ is dense. Since it is also an ideal, it contains the Pedersen ideal $C_c(Y)$ of $C_0(Y)$. This shows that $\alpha=(\alpha_x)_{x\in X}$  is a continuous $\pi$-system of measures.
 \end{proof}
 \end{example}

\begin{example}\label{groupoid} {\it The Haar $C_0(X)$-weight of a groupoid}. This example is a particular case of the previous example. Let $G$ be a locally compact groupoid. We use the range map $r:G\to G^{(0)}$ to turn the commutative C*-algebra $C_0(G)$ into a $C_0(X)$-algebra, where $X=G^{(0)}$. Then a continuous Haar system $(\lambda^x)_{x\in X}$ defines a densely defined and lower semi-continuous $C_0(X)$-weight of $C_0(G)$, which in accordance with the group case, can be called the Haar $C_0(X)$-weight of $G$. On the other hand, we cannot define a Plancherel $C_0(X)$-weight for an arbitrary groupoid $G$ since $C^*(G)$ is usually not a $C_0(X)$-algebra.
 
\end{example}
 
 We now have all the elements to prove the Proposition 3.6 in \cite{mrw:continuous-trace III}.

\begin{corollary}\label{continuity}
 Let $p:G\to X$ a locally compact bundle of abelian groups, equipped with a continuous Haar system $\lambda=(\lambda^x)_{x\in X}$. Then the family of dual Haar measures $\hat\lambda=(\hat\lambda^x)_{x\in X}$ is a continuous Haar system for $\hat p:\hat G\to X$.
\end{corollary}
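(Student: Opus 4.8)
The plan is to deduce the corollary formally from \thmref{group bundle} and \propref{commutative}. Since the fibres $G_x$ are abelian, $C^*(G)$ is a commutative $C_0(X)$-C*-algebra and the Gelfand transform $\mathcal G$ identifies it with $C_0(\hat G)$, the $C_0(X)$-structure being the one coming from $\hat p:\hat G\to X$; under this identification the fibre $C^*(G_x)$ becomes $C_0(\hat G_x)$ via the fibrewise Gelfand transform $\mathcal G_x$, so that $(\mathcal G(a))_x=\mathcal G_x(a_x)$. By \thmref{group bundle}, the Plancherel $C_0(X)$-weight $\mathcal T$ is a densely defined, lower semi-continuous $C_0(X)$-weight on $C^*(G)=C_0(\hat G)$. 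The map $\hat p$ is a continuous open surjection: surjectivity is immediate because each $\hat G_x$ contains the trivial character, and openness is part of the assertion, recalled in the introduction, that $\hat p$ is a locally compact abelian group bundle (\cite[Corollary 3.4]{mrw:continuous-trace III}).

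Next I would apply the correspondence of \propref{commutative} with $Y=\hat G$ and $\pi=\hat p$: the weight $\mathcal T$ corresponds to a unique continuous $\hat p$-system of Radon measures $\alpha=(\alpha_x)_{x\in X}$, characterised by $\mathcal T(a)(x)=\int \mathcal G(a)\,d\alpha_x$ for all $a\in C^*(G)_+$. It then remains to identify $\alpha_x$ with $\widehat{\lambda^x}$. This is forced by the very definition of $\mathcal T$, namely $\mathcal T(a)(x)=\int \mathcal G_x(a_x)\,d\widehat{\lambda^x}$, which says that the fibre weight of $\mathcal T$ at $x$ is integration against $\widehat{\lambda^x}$ on $C_0(\hat G_x)$; since a densely defined lower semi-continuous weight on a commutative C*-algebra is represented by a \emph{unique} Radon measure, we get $\alpha_x=\widehat{\lambda^x}$ for every $x$. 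As each $\widehat{\lambda^x}$ is by construction the Haar measure of $\hat G_x$, the family $(\widehat{\lambda^x})_{x\in X}$ is a Haar system for $\hat p$, and the continuity of the $\hat p$-system $\alpha$ is exactly the statement that this Haar system is continuous.

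All the analytic substance is already contained in \propref{main} (lower semi-continuity of $x\mapsto\mathcal T(a)(x)$) and in the proof of \propref{commutative} (where a densely defined weight is upgraded to a genuine, full-support $\pi$-system using that the Pedersen ideal of $C_0(\hat G)$ is $C_c(\hat G)$). So I do not expect a serious obstacle here; the only points needing care are bookkeeping ones: checking that the fibrewise Gelfand transforms really assemble into $\mathcal G$ and that $\mathcal T$'s fibre weight at $x$ is what it should be, and confirming that $\hat p$ meets the hypotheses of \propref{commutative}. The latter — openness of $\hat p$ — is the one place where the structure theory of \cite{mrw:continuous-trace III} for the dual bundle is genuinely invoked; once it is in hand, the corollary is immediate. (Alternatively, the direction of \propref{commutative} that we use only needs the pointwise representation of the fibre weights together with $C_c(\hat G)$ lying in the domain of $\mathcal T$, which sidesteps openness altogether.)
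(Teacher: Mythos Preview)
Your proposal is correct and follows essentially the same route as the paper: transport the Plancherel $C_0(X)$-weight $\mathcal T$ through the Gelfand transform to a densely defined lower semi-continuous $C_0(X)$-weight on $C_0(\hat G)$, then invoke \propref{commutative} and the identification of the fibre weights with the dual Haar measures $\widehat{\lambda^x}$. Your write-up is more explicit about checking the hypotheses (openness of $\hat p$, the fibrewise uniqueness that pins down $\alpha_x=\widehat{\lambda^x}$), but the argument is the same.
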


\begin{proof} From \thmref{group bundle}, we know that the Plancherel $C_0(X)$-weight $\mathcal T$ on $C^*(G)$ is lower semi-continuous and densely defined. Therefore $\Phi={\mathcal T}\circ{\mathcal G}^{-1}$, where ${\mathcal G}: C^*(G)\to C_0(\hat G)$ is the Gelfand transform, is a lower semi-continuous and densely defined $C_0(X)$-weight on $C_0(\hat G)$. Moreover, for all $x\in X$, the Plancherel weight $\tau_x$ of $G_x$ correspond to the Haar weight of $\hat G_x$, which is given by the Radon measure  $\hat\lambda^x$. From \propref{commutative}, the $\hat p$-system $\hat\lambda=(\hat\lambda^x)_{x\in X}$ is continuous.
 
\end{proof}

\vskip 5mm
{\it Acknowledgements.} I thank Henrik Kreidler for drawing the erroneous proof of  Proposition 3.6 in \cite{mrw:continuous-trace III} to our attention, Michel Hilsum for discussions which led to the present proof of \corref{continuity} and Dana Williams for sharing another proof and providing inspiring feedback and valuable comments.


\begin{thebibliography}{10}



\bibitem{com:poids} F.~Combes: {\it Poids sur une C*-alg\`ebre}, J. Math. pures et appl., {\bf 47} (1968), 57--100.

\bibitem{com:poids associe} F.~Combes: {\it Poids associ\'e \`a une alg\`ebre hilbertienne \`a gauche}, Compositio Mathematica, {\bf 23} no 1 (1971), 49--77.

\bibitem{dix:VN}  J.~Dixmier: {\it Les alg\`ebres de von Neumann}, Gauthier-Villars, 1969.

\bibitem{dix:C*}  J.~Dixmier: {\it Les C*-alg\`ebres et leurs repr\'esentations}, Gauthier-Villars, 1969.

\bibitem{eym:Fourier} P.~Eymard: {\it L'alg\`ebre de Fourier d'un groupe localement compact}, Bull. Soc. math. France, {\bf 92} (1964), 181--236.

\bibitem{haa:dual} U.~Haagerup: {\it On the dual weights for crossed products of von Neumann algebras II}, Math. Scand. {\bf 43} (1978) 119--140.

\bibitem{ks:regular} M.~Khoshkam and G.~Skandalis: {\it Regular representation of groupoid C*-algebras and applications to inverse semi-groups}, J. reine angew.  Math. {\bf 546} (2002) 47--72.

\bibitem{ikrsw:extensions}M.~Ionescu, A.~Kumjian, J.~Renault, A.~Sims and D.~Williams: {\it C*-algebras of extensions of groupoids by group bundles}, arXiv 2001:01312.

\bibitem{kus:regular} J.~Kustermans: {\it Regular C*-valued weights},  J. Operator Theory, \textbf{44} No 1 (2000), 151--205.

\bibitem{lr:quantization}N.~P.~Landsman and  B.~Ramazan: {\it Quantization of Poisson algebras associated to Lie algebroids}, Contemporary Mathematics {\bf 282} (2001), 159--192.

\bibitem{mrw:equivalence} P.~Muhly, J.~Renault and D.~Williams: {\it Equivalence and isomorphism for groupoid C*-algebras},  J. Operator Theory, \textbf{17}(1987), 3--22.

\bibitem{mrw:continuous-trace III} P.~Muhly, J.~Renault and D.~Williams: {\it Continuous trace groupoid C*-algebras III}, Trans. Amer. Math. Soc. {\bf 348} (1996), 3621--3641.

\bibitem{ped:C*}  G.~Pedersen: {\it C*-algebras and their automorphism groups}, Academic Press, 1979.

\bibitem{ren:groupoid} J.~Renault:  {\it A groupoid approach to C*-algebras}, Lecture Notes in Mathematics, \textbf{793}
(1980), Springer.

\bibitem{str:modular} S.~Stratila:  {\it Modular theory in operator algebras}, Abacus Press, 1981.

\bibitem{tak:tom} M.~Takesaki:  {\it Tomita's theory of modular Hilbert algebras and its applications}, Lecture Notes in Mathematics, \textbf{128}
(1970), Springer.

\bibitem{tak:II} M.~Takesaki:  {\it  Theory of operator algebras II}, Encyclopedia of Mathematical Sciences, Springer Verlag, 2002.


\end{thebibliography}
\end{document}